%
%
%
%
%


\documentclass[12pt]{amsart}

\usepackage{a4,amsmath,amssymb,url}
\usepackage{showlabels}
\usepackage[colorlinks=true,urlcolor=blue,linkcolor=blue,citecolor=blue]{hyperref}

\setlength{\parindent}{0mm}
\setlength{\parskip}{2mm}

\numberwithin{equation}{section}
\theoremstyle{plain}
\newtheorem{thm}{Theorem}[section]
\newtheorem{lem}[thm]{Lemma}

\theoremstyle{definition}

\newtheorem{que}[thm]{Question}

\theoremstyle{remark}


\newcommand{\A}{{\mathbf A}}

\newcommand{\F}{{\mathbf F}}

\renewcommand{\L}{\mathbf{L}}

\newcommand{\N}{{\mathbb{N}}}
\newcommand{\Z}{{\mathbb{Z}}}


\newcommand{\SMP}{\mathrm{SMP}}

\newcommand{\Clo}{\mathrm{Clo}}




\newcommand{\algop}[2]{\langle {#1}, {#2} \rangle}

\newcommand{\setsuchthat}{\ : \ }

\newlength{\ldprobleft} \setlength{\ldprobleft}{0.045\textwidth} 
\newlength{\ldprobmid}  \setlength{\ldprobmid}{0.12\textwidth} 
\newlength{\ldprobright}\setlength{\ldprobright}{0.78\textwidth} 

\newcommand{\dproblem}[3]{
\begin{equation*}
\parbox{\textwidth}{
\begin{tabular}{ @{} p{\ldprobleft} p{\ldprobmid} p{\ldprobright} @{} }
& \multicolumn{2}{l}{\textbf{\uppercase{#1}}} \\
& {\begin{minipage}[t]{\ldprobmid}Input:\vspace{1.5pt}\end{minipage}} & {\begin{minipage}[t]{\ldprobright}#2\vspace{1.5pt}\end{minipage}} \\ 
& {\begin{minipage}[t]{\ldprobmid}Problem:\end{minipage}} & {\begin{minipage}[t]{\ldprobright}#3\end{minipage}} \\
\end{tabular}}
\end{equation*}
}


\date{March 2, 2022}

\keywords{nilpotent Mal'cev algebra, finite basis problem, subpower membership problem}
  
\begin{document}


\title{Vaughan--Lee's nilpotent loop of size $12$ is finitely based}

\author{Peter Mayr}
\address[Peter Mayr]{Department of Mathematics,
University of Colorado,
Boulder, Colorado, USA}
\email{peter.mayr@colorado.edu}

\thanks{This material is based upon work supported by
the National Science Foundation grant no.\ DMS 1500254}
\subjclass[2010]{08B05, 20N05, 08A40}

\begin{abstract}
 From work of Vaughan--Lee in~\cite{Va:NPV} it follows that if a finite nilpotent loop splits
 into a direct product of factors of prime power order, then its equational theory has a finite basis.
 Whether the condition on the direct decomposition is necessary has remained open since.  
 In the same paper, Vaughan--Lee gives an explicit example of a nilpotent loop of order $12$
 that does not factor into loops of prime power order and asks whether it is finitely based.

 We give a finite basis for his example by explicitly characterizing its term functions.
 This also allows us to show that the subpower membership problem for this loop can be solved in
 polynomial time.  
\end{abstract}

\maketitle

\section{Introduction}

 In~\cite{Va:NPV} Vaughan--Lee showed that every finite nilpotent Mal'cev algebra of finite type that has an
 equationally defined constant and splits into a direct product of algebras or prime power order
 is finitely based. In~\cite{FM:CTC} Freese and McKenzie generalized this result by removing the assumption
 that the algebra has a constant.
 Hence a finite nilpotent Mal'cev algebra $\A$ has a finite basis for its equational theory if it satisfies
 one of the following equivalent conditions for some $n\in\N$:
\begin{enumerate}
\item
 $\A$ is a direct product of algebras of prime power order;
\item
 all \emph{commutator terms} on $\A$, i.e., term functions satisfying $c(x_1,\dots,x_k,z) = z$ whenever
 $x_i=z$ for some $i\leq k$, have essential arity at most $n+1$;
\item
  $\A$ is $n$-\emph{supernilpotent}, i.e., $[1_A,\dots,1_A] = 0_A$ for the $(n+1)$-ary \emph{higher commutator}.
\end{enumerate}
 The equivalence (1)$\Leftrightarrow$(2) follows from~\cite[Theorem 3.14]{Ke:CMVS} and (1)$\Leftrightarrow$(3)
 from~\cite[Lemma 7.6]{AM:SAHC}.

 Vaughan--Lee, Freese and McKenzie proved their finite basis results syntactically by developing normal forms
 for terms with respect to commutator terms and then making crucial use of the bounded arity of the latter.
 Since their work in the 1980s, it remains open whether conditions (1)-(3) above are necessary, that is:

\begin{que} \label{qu:basis}
 Is every finite nilpotent Mal'cev algebra of finite type finitely based?  
\end{que}   

 Unlike for groups, not every finite nilpotent Mal'cev algebra is isomorphic to a direct product of
 algebras of prime power order. Already in~\cite{Va:NPV} Vaughan--Lee gave the following example of an
 indecomposable $12$-element nilpotent loop $\L$ and asked whether it is finitely based.

 For $x\in\Z_{12}$ and $C := 4\Z_{12}$, let $\bar{x} := x+C$.
 Define binary operations
\[ t\colon \Z_{12}^2\to C, (x,y)\mapsto\begin{cases} 4 & \text{if } (x,y)\in \bar{1}\times\bar{3} \cup\bar{3}\times\bar{1}, \\ 0 & \text{else}, \end{cases} \]
 and
\[ x\cdot y := x+y+t(x,y) \quad \text{ for } x,y\in\Z_{12}. \] 
 Then $\L := \algop{\Z_{12}}{\cdot}$ is a commutative $2$-nilpotent loop with center $C$.
 The congruences of $\L$ are induced by the normal subloops $0,C,2\Z_{12},\Z_{12}$.
 In particular $\L$ is subdirectly irreducible and does not split into a direct product of loops of prime power orders.
 Consequently it is not supernilpotent.

 To the best of our knowledge there are no published results on the equational bases for any nilpotent,
 not supernilpotent Mal'cev algebras.
 We present a detailed analysis of Vaughan--Lee's loop $\L$ as a test case and starting point for a future,
 hopefully more systematic approach to Question~\ref{qu:basis}.

 First we note that $\L$ has the group $\algop{\Z_{12}}{+}$ as reduct, and
 we explicitly characterize its set of $k$-ary term functions $\Clo_k(\L)$ for any $k\in\N$.

\begin{thm} \label{th:clone}
 For $k\in\N$ let 
\[ W_k := \{ w\colon\Z_{12}^k\to C \setsuchthat w(2\Z_{12}^k) = 0, w(x) = w(-x) \text{ for all } x\in\Z_{12}^k \}. \]
 Then $\Clo_k(\L) = \Clo_k(\algop{\Z_{12}}{+}) \oplus W_k$.
\end{thm}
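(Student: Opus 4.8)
The plan is to reduce the theorem to a single statement about a $\Z_3$-vector space. Since $\algop{\Z_{12}}{+}$ is a reduct of $\L$, the operations $x+y$, $-x$ and $0$ are term operations of $\L$; hence $\Clo_k(\algop{\Z_{12}}{+})\subseteq\Clo_k(\L)$, every $\Z_{12}$-linear form $\sum_i a_ix_i$ is a term operation, and $\tau(x,y):=(x\cdot y)-x-y$ is a term operation which, by the definition of $\cdot$, equals $t$. Consequently, for all $\ell_1,\ell_2\in\Clo_k(\algop{\Z_{12}}{+})$ the composite $t(\ell_1,\ell_2)=\tau(\ell_1,\ell_2)$ is a $C$-valued term operation; since $C\cong\Z_3$ and $\Clo_k(\L)$ is closed under pointwise addition of term operations (compose with the $+$-term), the $\Z_3$-linear span $V_k$ of all such $t(\ell_1,\ell_2)$, taken inside the $\Z_3$-vector space of $C$-valued functions on $\Z_{12}^k$, is contained in $\Clo_k(\L)$.

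First I would show the equality $\Clo_k(\L)=\Clo_k(\algop{\Z_{12}}{+})+V_k$; the inclusion $\supseteq$ is immediate from the previous paragraph. For $\subseteq$, induct on the construction of an $\L$-term $s(x_1,\dots,x_k)$: letting $\ell_s$ denote the linear form obtained by replacing every $\cdot$ in $s$ with $+$, one checks $s=\ell_s+d_s$ with $d_s\in V_k$. A variable gives $d_s=0$; and if $s=s_1\cdot s_2$ then, since $t(x,y)$ depends only on the cosets $x+C$ and $y+C$, one has $s=\ell_{s_1}+\ell_{s_2}+(d_{s_1}+d_{s_2}+t(\ell_{s_1},\ell_{s_2}))$, where the bracketed function lies in $V_k$. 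Next I would verify $V_k\subseteq W_k$: a generator $t(\ell_1,\ell_2)$ is $C$-valued, vanishes on $2\Z_{12}^k$ (there $\ell_1,\ell_2$ take even values, which lie in no odd coset of $C$), factors through $(\Z_{12}/C)^k$, and satisfies $t(\ell_1(-x),\ell_2(-x))=t(-\ell_1(x),-\ell_2(x))=t(\ell_1(x),\ell_2(x))$ since negation on $\Z_{12}/C\cong\Z_4$ interchanges $\bar1$ and $\bar3$ and fixes $\bar0,\bar2$. Finally, a linear form in $W_k$ must be $0$: evaluated at the tuple having a single nonzero entry $2$ it forces each coefficient into $\{0,6\}$, and evaluated at the standard basis tuples into $C=\{0,4,8\}$. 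So $\Clo_k(\algop{\Z_{12}}{+})\cap W_k=0$, the sum $\Clo_k(\algop{\Z_{12}}{+})+V_k$ is direct, and the theorem is now equivalent to the identity $V_k=W_k$, of which the inclusion $V_k\subseteq W_k$ has just been established.

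The remaining inclusion $W_k\subseteq V_k$ is where I expect the real work to lie. Identifying $\Z_{12}/C$ with $\Z_4$, view $W_k$ as the $\Z_3$-space of symmetric functions $\Z_4^k\to\Z_3$ that vanish on $(2\Z_4)^k$; it has a basis consisting of the indicators $e_{\bar c}$ of the two-element orbits $\{\bar c,-\bar c\}$ with $\bar c\notin(2\Z_4)^k$, so it suffices to place each $e_{\bar c}$ in $V_k$. The structural observation is that, over $\Z_4$, $t(\ell_1(\bar x),\ell_2(\bar x))\neq0$ exactly when $\ell_1(\bar x)$ is odd and $\ell_1(\bar x)+\ell_2(\bar x)\equiv0$; hence, with $\ell_3:=\ell_1+\ell_2$, the support of $t(\ell_1,\ell_2)$ is $\ker\ell_3\setminus\ker(\ell_1\bmod2)$, so $t(\ell_1,\ell_2)$ is, up to the scalar identifying $C$ with $\Z_3$, the function $\mathbf 1_H-\mathbf 1_{H\cap U}$ for an index-$\leq4$ subgroup $H$ and an index-$\leq2$ subgroup $U$ of $\Z_4^k$ ranging over essentially all such pairs. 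Since $\{\bar c,-\bar c\}=\langle\bar c\rangle\setminus2\langle\bar c\rangle$, the aim would be to assemble $e_{\bar c}$ from these blocks by writing $\langle\bar c\rangle$ as an intersection of suitable kernels and then clearing the overlaps by an inclusion–exclusion over the coordinates in which $\bar c$ is odd. Carrying this through — equivalently, showing that the $\Z_3$-matrix recording the values of the functions $t(\ell_1,\ell_2)$ on the orbits has rank $\dim W_k$ — is the technical crux; everything else is the routine verification above. Once $V_k=W_k$ is proved, the displayed identities give $\Clo_k(\L)=\Clo_k(\algop{\Z_{12}}{+})\oplus W_k$.
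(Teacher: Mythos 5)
Your framework is sound and matches the paper's overall shape: you correctly observe that $\L$ has $t(x,y)=(x\cdot y)-x-y$ as a term operation, that every $\L$-term splits as a $\Z_{12}$-linear form plus a $C$-valued "error" function (your $V_k$), that such errors lie in $W_k$, and that the sum $\Clo_k(\algop{\Z_{12}}{+})+W_k$ is direct. All of this is essentially the paper's Lemma~\ref{le:t} together with the verification of~\eqref{eq:direct} and~\eqref{eq:<}, and it is fine.

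However, you have not proved the theorem: you explicitly stop at the inclusion $W_k\subseteq V_k$, calling it "the technical crux" and sketching a strategy ("assemble $e_{\bar c}$ from these blocks by \dots inclusion--exclusion over the coordinates in which $\bar c$ is odd" or "show a $\Z_3$-matrix has rank $\dim W_k$") without carrying it out. That inclusion is precisely the substance of the theorem, and it is where the paper does real work. The paper's route is: Lemma~\ref{le:gk} gives an explicit recursion expressing the particular indicator $g_k=f_{(\bar1,\bar0,\dots,\bar0)}$ as a $\Z_3$-sum of ten terms $g_{k-1}(x_1,\text{linear combo of }x_2,x_3,x_4,\dots,x_k)$, verified by direct computation of $4\times4$ tables over $C$-blocks; then Lemma~\ref{le:clone} pushes $g_k$ around by automorphisms of $(\Z_{12}/C)^k$ (lifted to linear substitutions over $\Z_{12}$) to produce the full basis $\{f_{\bar r}\}$ of $W_k$. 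Your sketch faces a genuine obstacle: the supports $\ker\ell_3\setminus\ker(\ell_1\bmod2)$ are cosets, and inclusion--exclusion with $\pm1$ coefficients must be reinterpreted over $\F_3$ (where $-1=2$), so it is not at all automatic that the orbit indicators $e_{\bar c}$ lie in the span; one has to produce and check an explicit identity, which is exactly what the paper's Lemma~\ref{le:gk} does. Until you supply such an identity (or an equivalent rank computation), the proof has a gap at its central step.
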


 Here $\oplus$ denotes the direct sum in the group of all $k$-ary functions $\algop{\Z_{12}}{+}^{\Z_{12}^k}$.
 The proof of Theorem~\ref{th:clone} makes up all of Section~\ref{se:clone}.

 From Theorem~\ref{th:clone} we obtain normal forms for term functions and terms on $\L$ and eventually
 a finite basis in Section~\ref{se:basis}. 
 
\begin{thm} \label{th:basis}
 $\L$ is finitely based.
\end{thm}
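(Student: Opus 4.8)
The plan is to use Theorem~\ref{th:clone} to set up normal forms for terms over the signature of $\L$ and to isolate a finite set $\Sigma\subseteq\Id(\L)$ from which all of $\Id(\L)$ follows. By Theorem~\ref{th:clone} the term function of a $k$-ary term $s$ decomposes uniquely as $f_s\oplus w_s$ with $f_s\in\Clo_k(\langle\Z_{12},+\rangle)$ a $\Z_{12}$-linear form and $w_s\in W_k$ a weight. I would fix once and for all a binary term $\oplus$ and a unary term $\ominus$ computing $x+y$ and $-x$ (possible because $\langle\Z_{12},+\rangle$ is a reduct of $\L$), and a binary term $T$ computing the twist $t$, so that $x\cdot y\approx(x\oplus y)\oplus T(x,y)$ holds in $\L$. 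The normal form of $s$ will be a term $\bigl(\bigoplus_{i=1}^{k}a_ix_i\bigr)\oplus N_s$, where $a_i\in\{0,\dots,11\}$ record $f_s$ in a fixed variable order and $N_s$ is a canonical ``weight term'' recording $w_s$. The theorem then reduces to proving: (i) $\Sigma$ proves $s\approx(\text{its normal form})$ for every term $s$; and (ii) the normal form of $s$ records exactly the pair $(f_s,w_s)$, so distinct normal forms compute distinct functions on $\L$. Indeed, if $s\approx t\in\Id(\L)$ then $f_s=f_t$ and $w_s=w_t$, hence $s$ and $t$ have the same normal form, whence $\Sigma\vdash s\approx t$ by (i).

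For the additive skeleton I would put into $\Sigma$: the finitely many loop axioms; commutativity of $\cdot$; the identity $x\cdot y\approx(x\oplus y)\oplus T(x,y)$ and, if the signature contains divisions, one identity per division expressing it through $\oplus,\ominus$ and a weight term; identities witnessing that $\oplus$ is commutative, associative, has unit $e$ and inverse $\ominus$, and satisfies $x\oplus\cdots\oplus x\approx e$ ($12$ summands), so that modulo $\Sigma$ the $\oplus$-reduct of every model is a $\Z_{12}$-module; and the $3$-torsion law $T(x,y)\oplus T(x,y)\oplus T(x,y)\approx e$. Using only the $\Z_{12}$-module identities one can, modulo $\Sigma$, rewrite any term into the shape $\bigl(\bigoplus a_ix_i\bigr)\oplus(\text{sum of weight terms})$ and then collapse the linear part by collecting coefficients mod $12$; this step is routine bookkeeping.

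The real work is the weight part, and here the obstacle is structural: since $\L$ is not supernilpotent, $\dim_{\Z_3}W_k\to\infty$, so the bounded-arity commutator-term normal forms of Vaughan--Lee and of Freese--McKenzie are unavailable and the finitely many, bounded-arity identities of $\Sigma$ must nonetheless normalize weights of every arity. I would proceed in two stages. First, extract from the proof of Theorem~\ref{th:clone} a uniform spanning set of $W_k$ by basic weight terms $T(\ell,\ell')$ with $\ell,\ell'$ ranging over $\Z_{12}$-linear forms, using that $t$ depends on its arguments only through their residues mod $4$ and is supported on a single off-diagonal pair of cosets. Second, fix a finite list of weight identities --- substitution instances of $T(x,y)\approx T(y,x)$, of $T(x,x)\approx e$, of $T(x\oplus(z\oplus z\oplus z\oplus z),y)\approx T(x,y)$ in each argument, of the $3$-torsion law, together with a finite stock of $\Z_3$-linear-dependence identities among basic weight terms --- and show these suffice to rewrite every weight term into a canonical $\Z_3$-combination. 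The crucial and hardest step is \emph{completeness}: that every linear relation among weight functions in arbitrarily many variables is a substitution consequence of the chosen finitely many relations of bounded arity. This is precisely where the special combinatorial shape of $t$ --- that it factors through the small quotient $\L\twoheadrightarrow\L/C\cong\Z_4$ with a single ``active'' pair of cosets --- must be exploited, and I expect it to be the main difficulty of the whole argument.

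Granting completeness, the rest is bookkeeping: every term has a unique normal form modulo $\Sigma$, and since a normal form records exactly the pair $(f_s,w_s)$, Theorem~\ref{th:clone} shows two terms compute the same function on $\L$ iff they have the same normal form. Hence $\Sigma$ axiomatizes $\Id(\L)$, and $\Sigma$ is finite.
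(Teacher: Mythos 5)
Your proposal correctly identifies the strategy the paper actually uses (reduce to a term-equivalent expansion of $\langle\Z_{12},+\rangle$ by a single binary weight operation, use Theorem~\ref{th:clone} to define normal forms as a $\Z_{12}$-linear part plus a $W_k$-part, and show a finite identity set rewrites every term to normal form). But the proposal has a genuine gap, and you name it yourself: you \emph{grant} completeness rather than prove it. The step you call ``the crucial and hardest step'' --- that every linear relation among weight functions of arbitrary arity follows by substitution from finitely many bounded-arity identities --- is exactly what the proof of the theorem consists of, and your plan leaves it entirely open. Declaring ``a finite stock of $\Z_3$-linear-dependence identities among basic weight terms'' without exhibiting them, and then assuming they suffice, is not a proof; the whole content is in choosing those identities and showing the rewriting terminates in a canonical form.

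Concretely, what the paper does and you do not: it trades $t$ for the simpler binary weight $f=g_2$ (whose value depends on $x\bmod 2$ and $y\bmod 4$ separately), writes down nine explicit identities, and proves three nontrivial expansion laws --- one splitting $f(x+y,z)$ into a $\Z_3$-combination of $f$-terms in fewer summands, and two reducing coefficients inside the second argument. These drive a recursion on the index $i$ of the leading variable $x_i$ in the first slot of $f$, producing normal-form summands $s_{a,c}$ and $t_{a,c}$ with tightly constrained coefficient ranges $a\in[2]^{i-1}$, $c\in R_{k-i}\cup\{0,2\}^{k-i}$ (resp.\ $c\in R_{k-i}$). Uniqueness then follows from a \emph{counting} argument: the number of summands produced is
\[ \sum_{i=1}^k 2^{i-1}\cdot 4^{k-i} = 2^{k-1}(2^k-1) = \dim_{\F_3} W_k, \]
so the spanning set is automatically a basis and distinct normal forms induce distinct functions. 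None of these three ingredients --- the explicit rewrite identities, the recursion that makes the reduction terminate, and the dimension count certifying uniqueness --- appears in your proposal, and without them the ``bookkeeping'' claim is unsupported. You have the right plan and the right diagnosis of where the difficulty lives, but the proof itself is still missing.
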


 A description of the term clone as above also has computational applications.
 The \emph{Subpower Membership Problem} $\SMP$ for an algebra $\A$ is the following decision problem:
\dproblem{$\SMP(\A)$}{
$a_1,\ldots,a_k,b \in A^n$}{
 Is $b$ in the subalgebra $\langle a_1,\ldots,a_k\rangle$ of $\A^n$ generated by $a_1,\ldots,a_k$?}

 Willard observed that the Subpower Membership Problem for finite groups and rings can be solved in polynomial time
 and asked whether it is in P for any finite algebra with cube term (in particular, for every Mal'cev algebra)
 ~\cite{IMMVW}. This question remains open in general but there are some partial results.
 Together with Bulatov and Szendrei, the author showed that $\SMP(\A)$ is in NP for every finite algebra
 $\A$ with a cube term~\cite{BMS:SMP}.
 In~\cite{Ma:SMP} we gave a polynomial time algorithm for $\SMP$ for finite supernilpotent Mal'cev algebras
 using commutator terms and the techniques developed by Vaughan--Lee, Freese and McKenzie for their finite basis results.
 Like the answer for the finite basis question, we do not know the computational complexity of subpower membership
 for nilpotent, not supernilpotent Mal'cev algebras (even for loops) in general.

 However, using the explicit description of term functions in Theorem~\ref{th:clone}, it is not hard to reduce $\SMP$ for
 Vaughan--Lee's loop $\L$ to $\SMP$ of the group $\algop{\Z_{12}}{+}$. Hence we obtain the following in Section~\ref{se:smp}:

\begin{thm} \label{th:smp}
 $\SMP(\L)$ is in $P$.
\end{thm}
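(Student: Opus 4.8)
The plan is to turn the description of $\Clo_k(\L)$ from Theorem~\ref{th:clone} into a polynomial-time reduction of $\SMP(\L)$ to $\SMP(\algop{\Z_{12}}{+})$, the latter being in $P$ by~\cite{IMMVW}. Fix an instance $a_1,\dots,a_k,b\in\Z_{12}^n$ of $\SMP(\L)$ and, for $l\le n$, write $\alpha_l:=(a_{1,l},\dots,a_{k,l})\in\Z_{12}^k$ for the $l$-th column of the array $(a_{i,j})$. The subalgebra of $\L^n$ generated by $a_1,\dots,a_k$ is $\{\,f(a_1,\dots,a_k)\setsuchthat f\in\Clo_k(\L)\,\}$, with $f$ applied coordinatewise. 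Since $\Clo_k(\algop{\Z_{12}}{+})$ is exactly the set of linear maps $(x_1,\dots,x_k)\mapsto\sum_i c_ix_i$ with $c_i\in\Z_{12}$, and since $\oplus$ is pointwise addition of functions, Theorem~\ref{th:clone} yields
\[ \langle a_1,\dots,a_k\rangle \;=\; \{\, \textstyle\sum_{i=1}^k c_i a_i + (w(\alpha_l))_{l\le n} \setsuchthat c_1,\dots,c_k\in\Z_{12},\ w\in W_k \,\}. \]
The first summand ranges exactly over the subgroup of $\algop{\Z_{12}}{+}^n$ generated by $a_1,\dots,a_k$ (note $\algop{\Z_{12}}{+}$ is a reduct of $\L$, so this subalgebra is in any case a subgroup of $\Z_{12}^n$).

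Set $N:=\{\,(w(\alpha_l))_{l\le n}\setsuchthat w\in W_k\,\}\subseteq C^n$; this is a subgroup of $\algop{\Z_{12}}{+}^n$, being the image of $W_k$ under the group homomorphism $w\mapsto(w(\alpha_l))_l$. I claim $N$ consists of exactly those $(d_l)_{l\le n}\in C^n$ with $d_l=0$ whenever $\alpha_l\in 2\Z_{12}^k$ and $d_l=d_{l'}$ whenever $\alpha_l=\pm\alpha_{l'}$. The inclusion ``$\subseteq$'' is immediate from the defining properties of $W_k$. For ``$\supseteq$'', given such a tuple define $w\colon\Z_{12}^k\to C$ to be the constant $d_l$ on the orbit $\{\pm\alpha_l\}$ of each column (well defined by the two conditions) and $0$ off $\bigcup_l\{\pm\alpha_l\}$; since $2\Z_{12}^k$ is closed under negation, $w\in W_k$, and $(w(\alpha_l))_l=(d_l)_l$. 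Hence $N$ has an explicit generating set: partition $\alpha_1,\dots,\alpha_n$ into orbits under $x\mapsto -x$, and for each orbit $\kappa$ none of whose members lies in $2\Z_{12}^k$ let $v_\kappa\in C^n$ have entry $4$ in the coordinates $l$ with $\alpha_l\in\kappa$ and $0$ elsewhere; then $N=\langle v_{\kappa_1},\dots,v_{\kappa_m}\rangle$ with $m\le n$, and the $v_{\kappa_j}$ are computable in time polynomial in $n$ and $k$.

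Since $\algop{\Z_{12}}{+}^n$ is abelian, combining the two facts above shows that $b$ lies in $\langle a_1,\dots,a_k\rangle$ in $\L^n$ if and only if $b$ lies in the subgroup of $\algop{\Z_{12}}{+}^n$ generated by $a_1,\dots,a_k,v_{\kappa_1},\dots,v_{\kappa_m}$. Computing the columns, their $\pm$-orbits and the $v_{\kappa_j}$ takes polynomial time, and the resulting instance of $\SMP(\algop{\Z_{12}}{+})$ has at most $k+n$ generators in $\Z_{12}^n$; as $\SMP$ of a finite group is in $P$~\cite{IMMVW}, so is $\SMP(\L)$. The only step of genuine content is the explicit description of $N$, i.e.\ identifying the image of $W_k$ under evaluation at the columns and extracting a polynomial-size generating set; everything else is the mechanical translation of Theorem~\ref{th:clone} into group membership, and I expect no further obstacle beyond verifying that $\algop{\Z_{12}}{+}$ really contributes all of $\Clo_k(\algop{\Z_{12}}{+})$ to $\Clo_k(\L)$ and that $\SMP(\algop{\Z_{12}}{+})\in P$ is invoked with the right parameters.
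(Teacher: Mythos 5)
Your overall strategy --- reduce $\SMP(\L)$ to group membership in $\algop{\Z_{12}}{+}^n$ by adjoining to $a_1,\dots,a_k$ a small generating set for the $W_k$-contribution, computed from the columns $\alpha_l$ --- is exactly the paper's. However, your characterization of $N$ and your choice of generators $v_\kappa$ use the wrong equivalence on columns: you merge $l$ and $l'$ only when $\alpha_l=\pm\alpha_{l'}$ in $\Z_{12}^k$, whereas the correct relation is the coarser $\alpha_l\equiv\pm\alpha_{l'}\bmod C^k$, as in the paper's $\sim$. The root cause is that the displayed definition of $W_k$ in Theorem~\ref{th:clone} is missing a clause: the $W_k$ the paper actually works with consists of maps $\Z_{12}^k\to C$ that \emph{factor through} $(\Z_{12}/C)^k$, vanish on $2\Z_{12}^k$ and satisfy $w(x)=w(-x)$. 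This is what the stated dimension $(4^k-2^k)/2$, the basis $\{f_{\bar r}\setsuchthat\bar r\in R_k\}$ of Lemma~\ref{le:clone}, and the fact that $t$ is constant on $C$-cosets all force. Your ``$\supseteq$'' step constructs a $w$ supported on the finite set $\bigcup_l\{\pm\alpha_l\}$; such a $w$ is in general not constant on $C^k$-cosets and hence not a term function of $\L$, so your description of $N$ overshoots.

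This is not cosmetic. Take $k=1$, $n=3$, $a_1=(1,5,9)$. Your $\pm$-orbits of columns are $\{1\}$, $\{5\}$, $\{9\}$ (each paired with its negative), so you add the generators $(4,0,0),(0,4,0),(0,0,4)$ and your algorithm accepts $b=(4,0,0)$. But the only nontrivial unary $W_1$-contribution at these columns is $(g_1(1),g_1(5),g_1(9))=(4,4,4)$, and a direct check shows $(4,0,0)\notin\langle(1,5,9),(4,4,4)\rangle\leq\Z_{12}^3$ (from the last two coordinates one gets $4a\equiv 0$, from the first two $4a\equiv 8\pmod{12}$, which is impossible). So $(4,0,0)$ is not in the subloop, but your reduction would say it is. Replacing your equivalence by $\alpha_l\equiv\pm\alpha_{l'}\bmod C^k$ makes your generators $v_\kappa$ coincide with the paper's $b_{i_1},\dots,b_{i_\ell}$, after which the remainder of your argument (polynomial-time computability of the orbits and generators, reduction to $\SMP(\algop{\Z_{12}}{+})\in P$) is correct.
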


 We mention without proof that similar but somewhat less involved arguments show that all nilpotent loops of
 order $6$ or $10$ are finitely based and have tractable $\SMP$ as well.

 Our approach in this paper is tailored to the particular example we consider and has obvious limitations.
 In general a full characterization of term functions of arbitrary nilpotent Mal'cev algebras as in
 Theorem~\ref{th:clone} may not be feasible even when restricting the nilpotence degree.
 Furthermore, a description of the term clone does not automatically yield a finite basis for the algebra
 as we see from the proof of Theorem~\ref{th:basis}.
 
 Still our admittedly limited evidence leads us to
 propose the following problems as steps towards a possible positive solution of Question~\ref{qu:basis}.

\begin{que} 
 Does every finite nilpotent Mal'cev algebra have a supernilpotent reduct?
\end{que}  

\begin{que} 
 Is every nilpotent expansion of a finite abelian (more generally supernilpotent) Mal'cev algebra finitely based?
\end{que}

 For $n\in\N$, let $[n]:= \{0,\dots,n-1\}$.
 
\section{The clone of term operations (proof of Theorem~\ref{th:clone})} \label{se:clone}

 We split the proof of Theorem~\ref{th:clone} into several lemmas. First we show that $\L$
 has a group reduct.

\begin{lem} \label{le:t}
 $\L$ is term equivalent to $\algop{\Z_{12}}{+,t}$.
\end{lem}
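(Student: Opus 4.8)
The plan is to show term equivalence in both directions. Since $\L=\algop{\Z_{12}}{\cdot}$ and $\algop{\Z_{12}}{+,t}$ have the same universe, it suffices to express each operation of one structure as a term operation of the other.

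First I would check the easy direction: $t$ is a term operation of $\L$. By definition $x\cdot y = x+y+t(x,y)$, and since $t$ takes values in $C=4\Z_{12}$, the element $t(x,y)$ is central and has additive order $3$; in particular $3t(x,y)=0$, so $2t(x,y)=-t(x,y)$ and hence $4t(x,y)=t(x,y)$. The idea is to recover $+$ first. A nilpotent loop has a Mal'cev term; more concretely, one expects a loop word in $\cdot$ that collapses the error term $t$ and returns $x+y$. Because $t$ only fires on the coset pair $\bar 1\times\bar 3\cup\bar 3\times\bar 1$ and $C$ is central of exponent $3$, a suitable symmetrization or repeated multiplication should cancel the $t$-contributions: for instance one can hope that an expression like $((x\cdot y)\cdot(x\cdot y))\cdot\big((x\cdot y)\cdots\big)$ built to multiply the ``$+$ part'' by a multiple of itself while multiplying each $t$-occurrence by a multiple of $3$ recovers $x+y$ on the nose. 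Once $+$ is available as a term of $\L$, we get $t(x,y) = (x\cdot y) - x - y$ (subtraction being a term in the group), so $t\in\Clo(\L)$.

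For the reverse direction, $\cdot$ is a term operation of $\algop{\Z_{12}}{+,t}$ essentially by fiat: the defining equation $x\cdot y=x+y+t(x,y)$ exhibits $\cdot$ as the composition of $+$ (applied twice) with $t$, all of which are basic operations of $\algop{\Z_{12}}{+,t}$. Hence $\Clo(\L)\subseteq\Clo(\algop{\Z_{12}}{+,t})$ and, with the first part, equality follows.

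The main obstacle is the first direction: producing an explicit loop term $w(x,y)$ in the single binary operation $\cdot$ that equals $x+y$ for all $x,y\in\Z_{12}$. This is a finite check once a candidate is written down, but finding the candidate requires understanding how iterated $\cdot$-products distribute the central error terms. The key observations to exploit are that $C$ is central (so all $t$-values commute and associate freely with everything), that $C$ has exponent $3$, and that the ``linear part'' of any $\cdot$-word in $x,y$ is just the corresponding $\Z_{12}$-linear combination of $x$ and $y$; choosing the word so that its linear part is $x+y$ while every induced occurrence of a $t$-term appears with coefficient divisible by $3$ will finish the proof.
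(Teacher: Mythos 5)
Your overall plan is sound and matches the paper in spirit, but there is a genuine gap: you never actually produce the loop term, and that is the whole content of the lemma. You acknowledge this yourself (``finding the candidate requires understanding how iterated $\cdot$-products distribute the central error terms''), so the proposal as written does not prove anything — it is a program for a proof, not a proof.

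The paper closes exactly this gap with a short explicit computation, and it goes in the opposite order from what you suggest: it recovers $t$ first, not $+$. Since $t(x,x)=0$ one has $x^2 = 2x$ in $\L$, and because $t$ takes values in the central subloop $C=4\Z_{12}$ of exponent $3$ (so $4t=t$), one computes $(xy)^4 = 4x+4y+t(x,y)$. Since $\cdot$ and $+$ agree on $2\Z_{12}$ and $(xy)^4, x^8, y^8$ all lie in $2\Z_{12}$, this gives
\[ (xy)^4\cdot x^8\cdot y^8 = 4x+4y+t(x,y)+8x+8y = t(x,y), \]
exhibiting $t$ as a term of $\L$. From there $+$ is also a term (e.g.\ $(x\cdot y)\cdot t(x,y)\cdot t(x,y) = x+y+3t(x,y)=x+y$, again using centrality and exponent $3$), and the reverse inclusion $\cdot\in\Clo(\algop{\Z_{12}}{+,t})$ is immediate as you say. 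To turn your proposal into a proof you must supply a concrete term like this; without it the argument stops at the point where the real work begins.
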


\begin{proof}
 Let $x,y\in\Z_{12}$. Note that $x^2 = 2x$ and consequently
\[ (xy)^4 = 4x+4y+t(x,y). \]
 Further $\cdot$ and $+$ are equal on $2\Z_{12}$ which yields
\[ (xy)^4 \cdot x^8 \cdot y^8 = t(x,y). \]
 Hence $t$ is a term operation on $\L$ and the assertion follows.
\end{proof}

 Note that $\Clo_k(\algop{\Z_{12}}{+})$ and $W_k$ are subgroups of $\Z_{12}^{\Z_{12}^k}$ by their definitions. We claim that
\begin{equation} \label{eq:direct}
 \text{ the sum } \Clo_k(\algop{\Z_{12}}{+}) + W_k \text{ is direct}.
\end{equation} 
 Let $w\in \Clo_k(\algop{\Z_{12}}{+}) \cap W_k$. Then $w(x) = \sum_{i=1}^k a_ix_i$ for some $a_1,\dots,a_k\in\Z$.
 Now $w(\Z_{12}^k) \subseteq C$ and $w(2\Z_{12}^k) = 0$  imply that all coefficients $a_1,\dots,a_k$ are multiples of $4$
 and of $6$, hence of $12$. Thus $w=0$ and~\eqref{eq:direct} is proved.

 For
\begin{equation} \label{eq:<}
  \Clo_k(\L) \subseteq \Clo_k(\algop{\Z_{12}}{+}) + W_k,
\end{equation}  
 it suffices that $\Clo_k(\algop{\Z_{12}}{+})+ W_k$ is closed under $+$ and $t$.
 Only closure under $t$ remains to be shown. Note that for any $a_1,\dots, a_k,b_1,\dots,b_k\in\Z$ and $w,v\in W_k$, 
\[ g(x) := t(\sum_{i=1}^k a_i x_i + w(x), \sum_{i=1}^k b_i x_i + v(x)) = t(\sum_{i=1}^k a_i x_i, \sum_{i=1}^k b_i x_i) \] 
 satisfies $g(2\Z_{12}^k) = 0$ and $g(x) = g(-x)$ for all $x\in\Z_{12}^k$ by the definition of $t$.
 Hence $g\in W_k$ and~\eqref{eq:<} follows.

 The converse containment $\supseteq$ in Theorem~\ref{th:clone} will follow from Lemma~\ref{le:clone}
 after establishing some more auxiliary results.
 First note that $W_k$ forms a vector space over the $3$-element field $\F_3$ of dimension $(4^k-2^k)/2$.
 To obtain normal forms for the functions in $W_k$ we construct a basis of term functions.
 
\begin{lem} \label{le:gk}
 Let $k\in\N$, and
 \[ g_k\colon \Z_{12}^k\to C,\ x\mapsto\begin{cases} 4 & \text{if } x_1\in 1+2\Z_{12}, x_2,\dots,x_k\in C, \\ 0 & \text{else}. \end{cases} \]
 Then $g_k$ is a term operation of $\L$.
\end{lem}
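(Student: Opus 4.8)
The plan is to argue by induction on $k$, handling $k\le 2$ directly and, for $k\ge 3$, obtaining $g_k$ from $g_{k-1}$. By Lemma~\ref{le:t} it suffices to build $g_k$ from $+$, $-$ and $t$. I use throughout that $t(u,v)=4$ holds exactly when $u$ and $v$ are both odd and $u+v\equiv 0\pmod 4$, so that $t(\ell_1,\ell_2)$ depends only on $\ell_1,\ell_2$ modulo $4$. For $k=1$ put $g_1(x_1)=t(x_1,-x_1)$: since $x_1+(-x_1)=0$, this is $4$ iff $x_1$ is odd, i.e.\ iff $x_1\in 1+2\Z_{12}$. For $k=2$ put $g_2(x_1,x_2)=t(x_1,x_2-x_1)$: since $x_1+(x_2-x_1)=x_2$, this is $4$ iff $x_1$ is odd and $x_2\equiv 0\pmod 4$, and $x_2\equiv 0\pmod 4$ is precisely $x_2\in C$.

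Now let $k\ge 3$ and assume $g_{k-1}$ is a term operation. For $c,d\in\Z$, plugging the term $dx_{k-1}+cx_k$ into the last argument of $g_{k-1}$ gives a term operation whose value at $x$ is $4$ if $x_1\in 1+2\Z_{12}$, $x_2,\dots,x_{k-2}\in C$ and $dx_{k-1}+cx_k\in C$, and $0$ otherwise; moreover $dx_{k-1}+cx_k\in C$ iff $d\bar x_{k-1}+c\bar x_k=0$ in $\Z_{12}/C\cong\Z_4$, where $\bar x=x+C$ as in the setup. The key ingredient is the following identity of integer‑valued functions on $\Z_4^2$, read modulo $3$: the indicator of $\{(0,0)\}$ is congruent to the sum of the indicators of all ten subgroups $H\le\Z_4^2$ with cyclic quotient. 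These are $\Z_4^2$ itself, its three subgroups of index $2$, and the six kernels of surjections onto $\Z_4$; equivalently, they are exactly the sets $\{(u,v):du+cv=0\}$ for $(d,c)\in\Z_4^2$. The congruence is checked by evaluating both sides at one point of each $\mathrm{GL}_2(\Z_4)$‑orbit on $\Z_4^2$, i.e.\ at $(0,0)$, at an order‑$2$ element, and at an order‑$4$ element, the whole configuration being $\mathrm{GL}_2(\Z_4)$‑invariant.

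Let $H_1,\dots,H_{10}$ be these subgroups, $H_i=\{(u,v):d_iu+c_iv=0\}$. I then claim
\[ g_k(x_1,\dots,x_k)=\sum_{i=1}^{10}g_{k-1}(x_1,\dots,x_{k-2},\,d_ix_{k-1}+c_ix_k), \]
which is a term operation, being a sum of term operations. At an $x$ with $x_1\notin 1+2\Z_{12}$ or with $x_j\notin C$ for some $2\le j\le k-2$, every summand is $0$, and so is $g_k(x)$. Otherwise every summand equals $4\cdot\mathbf 1_{H_i}(\bar x_{k-1},\bar x_k)$, so the right‑hand side is $4\sum_{i}\mathbf 1_{H_i}(\bar x_{k-1},\bar x_k)$; since $4n=4(n\bmod 3)$ in $\Z_{12}$ and $\sum_i\mathbf 1_{H_i}$ is congruent mod $3$ to the indicator of $\{(0,0)\}$, this equals $4$ when $\bar x_{k-1}=\bar x_k=0$, i.e.\ $x_{k-1},x_k\in C$, and $0$ otherwise. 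But in this case $g_k(x)$ takes the same two values, so the identity holds and the induction goes through.

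I expect the main obstacle to be the identity on $\Z_4^2$: once $k-1\ge 2$ the indicator of a point is no longer the kernel‑indicator of a single linear map to $\Z_4$, so one is forced to superpose several functions of the form $t(\ell_1,\ell_2)$ over $\F_3$. This is exactly the place where a single binary operation $t$ is not enough, in keeping with $\L$ being nilpotent but not supernilpotent; everything else in the argument is bookkeeping.
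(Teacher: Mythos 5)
Your proof is correct and rests on the same ten-term decomposition as the paper: for $k\ge 3$, both write $g_k$ as a sum of ten applications of $g_{k-1}$ obtained by collapsing two of the arguments to a linear combination of them, with coefficient pairs corresponding exactly to the ten subgroups $H\le\Z_4^2$ of cyclic quotient (the paper's ten pairs $(1,b)$ for $b\in[4]$, $(a,1)$ for $a\in\{0,2\}$, $(a,b)$ for $a,b\in\{0,2\}$ are one representative per such subgroup). The difference lies in verification. The paper fixes the slice $x_1=1$, $x_4=\dots=x_k=0$ and adds ten explicit $4\times4$ value tables. You state the key fact abstractly as $\mathbf{1}_{\{(0,0)\}}\equiv\sum_H\mathbf{1}_H\pmod3$ on $\Z_4^2$, note that both sides are $\mathrm{GL}_2(\Z_4)$-invariant, and check the three orbit representatives $(0,0)$, $(2,0)$, $(1,0)$, where the right side takes the values $10$, $6$, $3$. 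This is conceptually cleaner: it explains where the ten terms come from and why the superposition is over $\F_3$, at the modest price of invoking the subgroup classification of $\Z_4^2$ and the transitivity of $\mathrm{GL}_2(\Z_4)$ on elements of each order. Your base formula $g_2(x_1,x_2)=t(x_1,x_2-x_1)$ looks different from the paper's $g_2(x,y)=t(x,3x+y)$ but agrees with it since $t$ depends on its arguments only modulo $4$ and $-1\equiv 3\pmod 4$.
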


\begin{proof}
 We use induction on $k$. Note that $g_1(x) = t(x,-x)$ and $g_2(x,y) = t(x,3x+y)$ are term operations of $\L$ by
 Lemma~\ref{le:t}. So assume $k\geq 3$ in the following.
 By induction assumption we have the term function $h := g_{k-1}$. We claim that
\begin{align*} g_k(x_1,\dots,x_k) = & \sum_{b\in [4]} h(x_1,x_2+bx_3, x_4,\dots,x_k) \\
 & + \sum_{a\in\{0,2\}} h(x_1,ax_2+x_3,x_4\dots,x_k) \\
 & +\sum_{(a,b)\in\{0,2\}^2} h(x_1,ax_2+bx_3,x_4,\dots,x_k).
\end{align*} 
 By definition the functions on either side of the equation are $0$ whenever $x_1\in 2\Z_{12}$ or $x_4,\dots,x_k\not\in C$.
 Further they are constant for $x_1\in 1+2\Z_{12}$ and overall constant on $C$-blocks. 
 So it suffices to consider the graphs for the functions on the right hand side of the equation for
 $x_1=1$ and $x_4=\dots=x_k=0$. We give the second argument $ax_2+bx_3$ and the table for $h(1,ax_2+bx_3,0,\dots,0)$
 on $C$-blocks where $x_2$ runs through the representatives $0,2,1,3$ left to right and $x_3$ runs through
 $0,2,1,3$ bottom to top.
\[ \begin{array}{cccccc}
 x_2+0x_3 & x_2+1x_3 & x_2+2x_3 & x_2+3x_3 & 0x_2+x_3 & 2x_2+x_3 \\ 
& \\
 \begin{smallmatrix} 4&0&0&0\\4&0&0&0\\4&0&0&0\\4&0&0&0\\ \end{smallmatrix} &
 \begin{smallmatrix} 0&0&4&0\\0&0&0&4\\0&4&0&0\\4&0&0&0\\ \end{smallmatrix} &
 \begin{smallmatrix} 0&4&0&0\\0&4&0&0\\4&0&0&0\\4&0&0&0\\ \end{smallmatrix} &
 \begin{smallmatrix} 0&0&0&4\\0&0&4&0\\0&4&0&0\\4&0&0&0\\ \end{smallmatrix} &
 \begin{smallmatrix} 0&0&0&0\\0&0&0&0\\0&0&0&0\\4&4&4&4\\ \end{smallmatrix} &
 \begin{smallmatrix} 0&0&0&0\\0&0&0&0\\0&0&4&4\\4&4&0&0\\ \end{smallmatrix} 
\end{array} \]

\[ \begin{array}{cccc}
 0x_2+0x_3 & 0x_2+2x_3 & 2x_2+0x_3 & 2x_2+2x_3 \\
& \\
 \begin{smallmatrix} 4&4&4&4\\4&4&4&4\\4&4&4&4\\4&4&4&4\\ \end{smallmatrix} &
 \begin{smallmatrix} 0&0&0&0\\0&0&0&0\\4&4&4&4\\4&4&4&4\\ \end{smallmatrix} &
 \begin{smallmatrix} 4&4&0&0\\4&4&0&0\\4&4&0&0\\4&4&0&0\\ \end{smallmatrix} &
 \begin{smallmatrix} 0&0&4&4\\0&0&4&4\\4&4&0&0\\4&4&0&0\\ \end{smallmatrix} 
\end{array} \]
 Adding the tables in the first row yields
\[ \begin{smallmatrix} 4&4&4&4\\4&4&4&4\\8&8&4&4\\0&8&4&4\\ \end{smallmatrix} \]
 and adding those in the second row yields
\[ \begin{smallmatrix} 8&8&8&8\\8&8&8&8\\4&4&8&8\\4&4&8&8\\\end{smallmatrix} \]
 Their sum is the table of $g_k(1,x_2,x_3,0,\dots,0)$,
\[ \begin{smallmatrix} 0&0&0&0\\0&0&0&0\\0&0&0&0\\4&0&0&0\\ \end{smallmatrix} \]
 This proves that $g_k$ is a term function.
\end{proof}

 To simplify the description of term functions on $\L$ we show that $\L$ is term equivalent to the expansion of the
 group $\algop{\Z_{12}}{+}$ with the binary function $g_2$ from the previous lemma. We mention without proof that $\L$ is not
 equivalent to an expansion of $\algop{\Z_{12}}{+}$ with unary functions.

\begin{lem} \label{le:f}
 Let $f:= g_2$. Then $\L$ is term equivalent to $\algop{\Z_{12}}{+,f}$.
\end{lem}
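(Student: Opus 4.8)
The plan is to lean on Lemma~\ref{le:t}, which already identifies $\L$ with $\algop{\Z_{12}}{+,t}$ up to term equivalence. Since $f=g_2$ is a term operation of $\L$ by Lemma~\ref{le:gk} (it was even exhibited there as $f(x,y)=t(x,3x+y)$), it is enough to write $t$ as a term built from $+$ and $f$ over $\Z_{12}$; then $\Clo(\algop{\Z_{12}}{+,t})=\Clo(\algop{\Z_{12}}{+,f})$, and combining with Lemma~\ref{le:t} gives the claim.

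To find the required expression I would first restate the two functions in terms of residues mod $4$. With $C=4\Z_{12}=\{0,4,8\}$ and $\bar x=x+C$, one has $t(x,y)=4$ precisely when $(x,y)\in\bar 1\times\bar 3\cup\bar 3\times\bar 1$, and $f(x,y)=4$ precisely when $x$ is odd and $y\in C$; both functions are $0$ otherwise. Now $x+y\in C$ is the same as $x\equiv -y\pmod{4}$, so for odd $x$ the condition that $x$ is odd and $x+y\in C$ becomes: $x\equiv 1,\ y\equiv 3\pmod{4}$, or $x\equiv 3,\ y\equiv 1\pmod{4}$; and for even $x$ we simply get $f(x,x+y)=0=t(x,y)$. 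Hence $t(x,y)=f(x,x+y)$ identically on $\Z_{12}^2$, which is the identity I want; verifying it amounts to the finite residue check just sketched.

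I do not expect a genuine obstacle here: the entire content is spotting the substitution $y\mapsto x+y$, after which everything is a routine comparison of the two defining case distinctions. Should one wish to avoid guessing the substitution, an alternative is to observe that $t$, $f$, and all functions $t(\ell,m)$, $f(\ell,m)$ obtained by plugging linear forms $\ell,m$ into them take values in the central subgroup $C\cong\F_3$ and depend on their arguments only mod $4$; consequently the functions $\Z_{12}^2\to C$ expressible from $+$ and $f$ form a finite, explicitly enumerable $\F_3$-subspace of $C^{\Z_{12}^2}$, and checking that $t$ lies in it is a mechanical linear-algebra computation. Either way the argument is short, and in the write-up I would simply record and check the one-line identity $t(x,y)=f(x,x+y)$.
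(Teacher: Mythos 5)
Your proof is correct and matches the paper's: both rely on the pair of identities $f(x,y)=t(x,3x+y)$ and $t(x,y)=f(x,x+y)$, combined with Lemma~\ref{le:t}, to establish the term equivalence. Your residue-mod-$4$ verification of $t(x,y)=f(x,x+y)$ is a correct elaboration of what the paper states without proof.
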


\begin{proof}
 This follows from $f(x,y) = t(x,3x+y)$ and $t(x,y) = f(x,x+y)$ for all $x,y\in\Z_{12}$.
\end{proof}

 Finally we define a simple basis of $W_k$.

\begin{lem} \label{le:clone}
 Let $k\in\N$, and let $R_k$ contain exactly one generator for every cyclic subgroup of order $4$ of $\algop{\Z_{12}/C}{+}^k$.
 For $\bar{r}\in R_k$, define
\[ f_{\bar{r}}\colon\Z_{12}^k\to C,\ x\mapsto \begin{cases} 4 & \text{if } x\in \bar{r} \cup-\bar{r}, \\ 0 & \text{else}. \end{cases} \]
 Then
\begin{enumerate}
\item \label{it:fr}
 $f_{\bar{r}}\in\Clo_k(\L)$ and 
\item \label{it:Wk}
 $\{ f_{\bar{r}} \setsuchthat \bar{r}\in R_k\}$ is a basis for $W_k$ as a vector space over $\F_3$.
 In particular $W_k\subseteq \Clo_k(\L)$.
\end{enumerate}
\end{lem}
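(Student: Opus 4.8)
The plan is to produce every $f_{\bar{r}}$ as a $\Z_{12}$-linear change of variables applied to the term function $g_k$ from Lemma~\ref{le:gk}, and then to read off the basis claim in \eqref{it:Wk} from a disjoint-support argument together with the already-established dimension of $W_k$.

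First I would observe that $g_k$ is already of the form $f_{\bar{r}}$. Identify $\Z_{12}/C$ with $\Z_4$ by reduction modulo $4$, so that a $C$-coset of $\Z_{12}^k$ is an element $\bar{x}\in\Z_4^k$. The two conditions ``$x_1\in 1+2\Z_{12}$'' and ``$x_2,\dots,x_k\in C$'' defining $g_k$ say precisely that $\bar{x}\in\{\bar{e}_1,-\bar{e}_1\}$ with $\bar{e}_1=(\bar 1,\bar 0,\dots,\bar 0)$, so $g_k=f_{\bar{e}_1}$. Now fix an arbitrary $\bar{r}\in R_k$. Since $\bar{r}$ has order $4$ in $\Z_4^k$, one of its coordinates is a unit of $\Z_4$, so $\bar{r}$ is part of a $\Z_4$-basis of the free module $\Z_4^k$ and there is $\bar{A}\in\mathrm{GL}_k(\Z_4)$ with $\bar{A}\bar{r}=\bar{e}_1$. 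Lifting the entries of $\bar{A}$ to integers gives a matrix $A$, and the map $x\mapsto Ax$ on $\Z_{12}^k$ is a term operation of $\L$ because $\algop{\Z_{12}}{+}$ is a reduct of $\L$ by Lemma~\ref{le:t}, so every $\Z_{12}$-linear map is given by term operations. Since $g_k=f_{\bar{e}_1}$ depends only on $\bar{x}$, the composite $g_k\circ A$ (that is, $x\mapsto g_k(Ax)$) takes the value $4$ exactly when $\bar{A}\bar{x}\in\{\bar{e}_1,-\bar{e}_1\}$, i.e.\ exactly when $\bar{x}\in\{\bar{r},-\bar{r}\}$; hence $f_{\bar{r}}=g_k\circ A\in\Clo_k(\L)$, proving \eqref{it:fr}.

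For \eqref{it:Wk} I would first check $f_{\bar{r}}\in W_k$: it has values in $\{0,4\}\subseteq C$, it is constant on $C$-cosets by construction, it is even since $\{\bar{r},-\bar{r}\}$ is closed under negation, and it vanishes on $2\Z_{12}^k$ because the image of $2\Z_{12}^k$ in $\Z_4^k$ is the set of elements of order at most $2$, which does not contain $\pm\bar{r}$. Linear independence over $\F_3$ is then immediate: distinct elements of $R_k$ generate distinct cyclic subgroups of order $4$, and two such subgroups share no element of order $4$, so the functions $f_{\bar{r}}$ have pairwise disjoint supports. Finally $|R_k|$ equals the number $(4^k-2^k)/2$ of negation-orbits of order-$4$ elements of $\Z_4^k$, which is exactly the dimension of $W_k$; so a linearly independent family of that size is a basis of $W_k$, and with \eqref{it:fr} this also yields $W_k\subseteq\Clo_k(\L)$.

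I do not expect a genuine obstacle, since the computational heavy lifting is already done in Lemma~\ref{le:gk}. The only point needing care is the reduction in \eqref{it:fr}: one must keep straight that $f_{\bar{r}}$ is governed by the quotient $\Z_{12}^k\to(\Z_{12}/C)^k$, and one uses the elementary fact that an order-$4$ element of $\Z_4^k$ can be completed to a basis, so that a suitable $\Z_{12}$-linear substitution — legitimate precisely because it is a term operation of $\L$ — transports $g_k$ to $f_{\bar{r}}$. Part \eqref{it:Wk} is then bookkeeping once the disjointness of supports is noticed.
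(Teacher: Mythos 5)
Your argument is correct and matches the paper's proof in all essentials: for \eqref{it:fr} the paper likewise notes $g_k=f_{(\bar 1,\bar 0,\dots,\bar 0)}$ and transports it to $f_{\bar r}$ via an automorphism of $(\Z_{12}/C)^k$ realized by $\Z_{12}$-linear (hence group-reduct) term operations, and for \eqref{it:Wk} it likewise uses disjointness of supports for independence. The only cosmetic difference is that the paper finishes \eqref{it:Wk} by observing directly that $W_k$ consists of the functions constant on the supports of the $f_{\bar r}$ (so the family spans), whereas you conclude spanning from the dimension count $|R_k|=(4^k-2^k)/2=\dim W_k$; both are valid and rely on the same information.
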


\begin{proof}
 For~\eqref{it:fr} note that $f_{(\bar{1},\bar{0},\dots,\bar{0})} = g_k$ is a term function by Lemma~\ref{le:gk}.
 For every $\bar{r}\in R_k$ there exists an automorphism $\bar{\varphi}$ on the group $(\Z_{12}/C)^k$ that maps $\bar{r}$ to
 $(\bar{1},\bar{0},\dots,\bar{0})$. There exist $k$-ary term functions $\varphi_1,\dots,\varphi_k$ on $\algop{\Z_{12}}{+}$
 such that $\varphi(x) := (\varphi_1(x),\dots,\varphi_k(x))$ induces $\bar{\varphi}$.
 Hence $f_{\bar{r}}(x) = f_{(\bar{1},\bar{0},\dots,\bar{0})}(\varphi_1(x),\dots,\varphi_k(x))$ is a term function as well.  

 For~\eqref{it:Wk} note that for distinct $\bar{r},\bar{s}\in R_k$ the supports of $f_{\bar{r}}$ and $f_{\bar{s}}$ are disjoint.
 Hence $B := \{ f_{\bar{r}} \setsuchthat \bar{r}\in R_k \}$ is linearly independent.
 Further $W_k$ consists exactly of the functions that are constant on the supports of $f_{\bar{r}}$ for $\bar{r}\in R_k$.
 Thus $B$ spans $W_k$ and is a basis. 
\end{proof}

 By Lemma~\ref{le:clone} we have that $\Clo(\algop{\Z_{12},+})+W_k$ is contained in $\Clo(\L)$,
 which concludes the proof of Theorem~\ref{th:clone}.
 Further every $h\in\Clo_k(\L)$ has a unique normal form
\begin{equation} \label{eq:normalform}
  h(x) = \sum_{i=1}^k a_i x_i + \sum_{\bar{r}\in R_k} b_{\bar{r}} f_{\bar{r}}
\end{equation}
for coefficients $a_i\in [12], b_{\bar{r}}\in [3]$.

\section{Subpower membership (Proof of Theorem~\ref{th:smp})} \label{se:smp}

 Let $a_1,\dots,a_k,b\in\Z_{12}^n$ be the input of $\SMP(\L)$. Define an equivalence relation $\sim$ on
 $\{1,\dots,n\}$ by 
\[ i\sim j \text{ if } (a_{1i}, \dots, a_{ki}) \equiv \pm (a_{1j}, \dots, a_{kj}) \bmod C^k. \]
 Equivalently $i\sim j$ iff the classes of $(a_{1i}, \dots, a_{ki})$ and  of $(a_{1j}, \dots, a_{kj})$ in $(\Z_{12}/C)^k$
 generate the same cyclic subgroup.

 For $i\leq n$ with $(a_{1i}, \dots, a_{ki}) \not\in 2\Z_{12}^k$ define
\[ b_i\in\Z_{12}^n \text{ by } b_{ij} := \begin{cases} 4 & \text{if } i\sim j, \\ 0 & \text{else.} \end{cases} \]
 Then $b_i = f_{\bar{r}}(a_1,\dots,a_k)$ for $\bar{r}$ the coset of $(a_{1i}, \dots, a_{ki})$ modulo $C$ and $f_{\bar{r}}$
 as defined in Lemma~\ref{le:clone}.
 Let $\{ b_{i_1},\dots,b_{i_\ell}\}$ be the set of tuples obtained in this way. Note that $\ell\leq n$.
 By Theorem~\ref{th:clone} and the normal form given in~\eqref{eq:normalform} we see that
 the subloop of $\L^n$ that is generated by $a_1,\dots,a_k$ is equal to the subgroup of $\algop{\Z_{12}}{+}^n$ generated
 by $a_1,\dots,a_k,b_{i_1},\dots,b_{i_\ell}$. 
 Since membership for the latter can be checked in polynomial time in $(k+\ell)n$, that is, in $kn$, by linear algebra,
 so can membership for the former.
 This proves Theorem~\ref{th:smp}.

\section{Equational basis (Proof of Theorem~\ref{th:basis})} \label{se:basis}

 Since $\L$ and $\A:=\algop{\Z_{12}}{+,-,0,f}$ are term equivalent by Lemma~\ref{le:f},
 it suffices to show that the latter is finitely based.
 We claim that the standard identities for the abelian group $\algop{\Z_{12}}{+,-,0}$ together with
 the following form a finite basis for the equational theory of $\A$:
\begin{enumerate}
\item \label{it:ff}
 $f(x+f(u,v), y +f(r,s)) = f(x,y)$
\item \label{it:x+2u,y+4v}
 $f(x+2u,y+4v) = f(x,y)$
\item \label{it:3f}
 $3f(x,y) = 0$
\item \label{it:0,y} 
 $f(0,y) = 0$
\item \label{it:x,3y}
 $f(x,3y) = f(x,y)$
\item \label{it:x+y,y}
  $f(x+y,y) = f(x,y)$
\item \label{it:x+y,z}
 $f(x+y,z) = f(x,2x+2y+2z)+f(x,2y+z)-f(x,2y)+f(x,2z)+f(y,z)$
\item \label{it:x,2x+y}
 $f(x,2x+y) =  f(x,2y)-f(x,y)$
\item \label{it:x,2y+z}
 $f(x,2y+z) = f(x,z)-f(y,z)+f(y,2x+z)$
\end{enumerate}
 Identities~\eqref{it:ff} to~\eqref{it:x+y,y} are immediate from the definition of
\[ f\colon\Z_{12}^2\to 4\Z_{12},\ (x,y) \mapsto \begin{cases} 4 & \text{if } x\in 1+2\Z_{12}, y\in 4\Z_{12}, \\ 0 & \text{else.} \end{cases} \]
 The remaining equations may appear less intuitive at first glance but they actually arise naturally when interpolating
 ternary functions in $W_3$ using the binary $f$ and linear maps on $\Z_{12}$. For showing their correctness,
 let $C = 4\Z_{12}$ and $D := 2\Z_{12}$. 

 For~\eqref{it:x,2x+y} note that both sides of the equation vanish if $x\in D$. So assume $x\in 1+D$.
 Then the left hand side $f(1,2+y)$ is $4$ if $y\in 2+C$ and $0$ else.
 This is equal to the right hand side $f(1,2y)-f(1,y)$.

 Next~\eqref{it:x,2y+z} is immediate by~\eqref{it:x+2u,y+4v} if $x\in D$ or $y\in D$.   
 So assume $x,y \in 1+D$. Then $2x,2y\in 2+C$ and~\eqref{it:x+2u,y+4v} yields $f(x,z) = f(y,z)$ as well as
 $f(x,2y+z) = f(y,2x+z)$. Hence~\eqref{it:x,2y+z} follows.

 Finally~\eqref{it:x+y,z} follows from~\eqref{it:x+2u,y+4v} if $x\in D$ because then all but the last summand on the
 right hand side vanish. If $y\in D$, then $2y\in C$ and by~\eqref{it:x+2u,y+4v} the right hand side reduces to
 \[ \underbrace{f(x,2x+2z)}_{=f(x,0)-f(x,2z) \text{ by}~\eqref{it:x,2x+y}}+f(x,z)-f(x,0)+f(x,2z)+\underbrace{f(0,z)}_{=0} = f(x,z), \]
 same as the left hand side.
 So assume $x,y \in 1+D$. Then $2x,2y\in 2+C$ and the right hand side simplifies to
 \[ f(x,2z)+\underbrace{f(x,2x+z)}_{=f(x,2z)-f(x,z) \text{ by}~\eqref{it:x,2x+y}}-\underbrace{f(x,2)}_{=0}+f(x,2z)+f(x,z) = 0. \]
 Since $x+y\in D$, the left hand side $f(x+y,z)$ yields $0$ as well. Thus~\eqref{it:x+y,z} is proved. 

 Next we show that using the identities of the group $\Z_{12}$ together with~\eqref{it:ff}-\eqref{it:x,2y+z}
 we can transform terms into normal forms.
 By~\eqref{it:ff} and~\eqref{it:x+2u,y+4v} every $k$-ary term in the language of $\A$ can be transformed
 into a sum of variables and terms 
\[ f(\sum_{i=1}^{k} v_i x_i,\sum_{j=1}^{k} w_j x_j) \text{ with } v_1,\dots,v_k\in [2], w_1,\dots,w_k\in [4]. \]
 The latter can be rewritten into a sum of
\[ f(x_i,\sum_{j=1}^{k} w_j x_j) \text{ for } i\leq k, w_1,\dots,w_k\in [4] \]
 by~\eqref{it:0,y} and ~\eqref{it:x+y,z}.
 By~\eqref{it:x,2x+y} we may assume that $w_i\in [2]$. To show that $w_j$ can be chosen to be in $[2]$ for all $j\leq i$
 above, consider the smallest $r$ such that $1\leq r <i$ and $w_r\in\{2,3\}$.
 For $z:= (w_r-2)x_r+ \sum_{j\neq r} w_j x_j$ equation~\eqref{it:x,2y+z} yields
\[ f(x_i,2x_r+z) = f(x_i,z) - f(x_r,z)+f(x_r, 2x_i+z). \]
 For all $j\leq r$ the coefficients of $x_j$ in $z$ are in $\{0,1\}$. Repeating this procedure for $f(x_i,z)$
 if necessary, eventually yields that every term of $\A$ can be written as sum of variables and  
\[ f(x_i,\sum_{j=1}^{i-1} a_j x_j + bx_i + \sum_{j=i+1}^{k} c_j x_j) \text{ for }  i\leq k, a_1,\dots,a_{i-1},b\in [2], c_{i+1},\dots,c_k\in [4]. \]
 We can give further restrictions on the coefficients $a_j,c_j$ above depending on whether $b$ is $0$ or $1$.

 {\bf Case $b=0$:} Note that
\begin{align*}
 f(x,y+3z) & = f(x,3y+z) \text{ by }\eqref{it:x,3y} \\
 & = f(x,2y+\underline{y+z}) \\
 & = f(x,\underline{y+z})-f(y,\underline{y+z})+f(y,2x+\underline{y+z}) \text{ by }\eqref{it:x,2y+z}.
\end{align*}
 Let $c := (c_{i+1},\dots,c_k)\in [4]^{k-i}\setminus \{0,2\}^{k-i}$. Then $3c\not\equiv c \bmod 4$. 
 Using the identity above 
 for $x=x_i, y = \sum_{j=1}^{i-1} a_j x_j, z =\sum_{j=i+1}^{k} c_j x_j$ together with~\eqref{it:x+y,z} we have that
\[ f(x_i,\sum_{j=1}^{i-1} a_j x_j + 3 \sum_{j=i+1}^{k} c_j x_j) \] 
 can be expressed as a sum of $f(x_i,\sum_{j=1}^{i-1} a_j x_j + \sum_{j=i+1}^{k} c_j x_j)$ and terms $f(x_j,\dots)$ for
 $j<i$.

 {\bf Case $b=1$:} Note that
\begin{align*}
 f(x,x+y+3z) = & f(x,3x+3y+z) \text{ by }\eqref{it:x,3y} \\
 = & f(x,2x+\underline{x+3y+z}) \\
 = & f(x,2x+\underline{2y+2z})-f(x,x+3y+z) \text{ by }\eqref{it:x,2x+y} \\
 = & [f(x,0)-f(x,2y+\underline{2z})]-f(x,2y+\underline{x+y+z}) \text{ by }\eqref{it:x,2x+y} \\
 = & f(x,0)- [f(x,2z)-f(y,2z)+f(y,2x+2z)] \\ 
   & - [f(x,x+y+z)-f(y,x+y+z)+f(y,3x+y+z)]\text{ by }\eqref{it:x,2y+z}.
\end{align*}
 Let $c := (c_{i+1},\dots,c_k)\in[4]^{k-i}\setminus \{0,2\}^{k-i}$. Then $3c\not\equiv c \bmod 4$. 
 Using the identity above for $x=x_i, y = \sum_{j=1}^{i-1} a_j x_j, z =\sum_{j=i+1}^{k} c_j x_j$ together
 with~\eqref{it:x+y,z} we have that
 \[ f(x_i,\sum_{j=1}^{i-1} a_j x_j + x_i+ 3 \sum_{j=i+1}^{k} c_j x_j) \]
 can be expressed as a sum of $f(x_i,0), f(x_i,2\sum_{j=i+1}^{k} c_j x_j),
 f(x_i,\sum_{j=1}^{i-1} a_j x_j + x_i + \sum_{j=i+1}^{k} c_j x_j)$ and terms $f(x_j,\dots)$ for $j<i$.

 If $c_{i+1},\dots,c_k\in\{0,2\}$, then by~\eqref{it:x+y,y} and~\eqref{it:x+2u,y+4v}
\begin{align*}
  f(x_i,\sum_{j=1}^{i-1} a_j x_j + x_i + \sum_{j=i+1}^{k} c_j x_j)
& = f(\sum_{j=1}^{i-1} a_j x_j, \sum_{j=1}^{i-1} a_j x_j + x_i + \sum_{j=i+1}^{k} c_j x_j)
\end{align*}
 with the latter expressible as a sum of terms $f(x_j,\dots)$ for $j<i$ by~\eqref{it:x+y,z}.

 For $\ell\in\N$ define an equivalence relation $\sim$ on $[4]^\ell \setminus \{0,2\}^\ell$ by $c\sim d$ if
 $c \equiv \pm d \bmod 4$.
 Let $R_\ell$ be a transversal for $\sim$. Note that this is essentially the same set as defined in Lemma~\ref{le:clone}.
 Again $|R_\ell| = (4^\ell-2^\ell)/2$.

 In summary, using the identities of the group $\Z_{12}$ and ~\eqref{it:ff} to~\eqref{it:x,2y+z} as well as recursion on
 $i\in [k]$, we managed to rewrite any $k$-ary term $h$ of $\A$ as a sum of variables and terms
\begin{align*} 
 s_{a,c} & := f(x_i,\sum_{j=1}^{i-1} a_j x_j+ \sum_{j=i+1}^k c_j x_j)  \text{ for } i\in [k], a\in [2]^{i-1}, c\in R_{k-i}\cup \{0,2\}^{k-i}, \\
 t_{a,c} & := f(x_i,\sum_{j=1}^{i-1} a_j x_j+x_i+ \sum_{j=i+1}^k c_j x_j)  \text{ for } i\in [k], a\in [2]^{i-1}, c\in R_{k-i}.
\end{align*}
 We claim that this representation is a normal form for $h$, more precisely,
 for $u_i\in [12], v_{a,c}, w_{a,c} \in [3]$ we have
\[ \sum_i u_i x_i + \sum_{a,c} v_{a,c} s_{a,c} + \sum_{a,c} w_{a,c} t_{a,c} = 0 \text{ iff  all } u_i, v_{a,c}, w_{a,c} \text{ are } 0. \]
 Assume that the term on the lefthand side induces the constant $0$-function on $\A$.
 From the direct decomposition in Theorem~\ref{th:clone} we see $u_1 = \dots = u_k = 0$.
 The functions induced by $s_{a,c},t_{a,c}$ span $W_k$ by their construction and by Lemma~\ref{le:clone}.
 To see that this spanning set for $W_k$ actually forms a basis it suffices to determine its size.
 For fixed $i\in [k]$ the number of choices for $a,c$ in $s_{a,c}$ and in $t_{a,c}$ together is
\[ 2^{i-1} [(4^{k-i}-2^{k-i})/2+2^{k-i}] + 2^{i-1} [(4^{k-i}-2^{k-i})/2] = 2^{i-1}\cdot 4^{k-i} = 2^{k-1}\cdot 2^{k-i}. \] 
 Hence summing up over all $i\in [k]$ we have
\[ 2^{k-1}\cdot\sum_{i=1}^k 2^{k-i} = 2^{k-1}(2^k-1) = \dim W_k \]
 functions. Hence they are linearly independent over $\F_3$, and $\sum_{a,c} v_{a,c} s_{a,c} + \sum_{a,c} w_{a,c} t_{a,c} = 0$
 only if all coefficients $v_{a,c}, w_{a,c}$ are $0$.

 Thus every $k$-ary term function on $\A$ can be rewritten into a uniquely determined normal form using the finitely
 many identities given above, and $\A$ is finitely based.
 Theorem~\ref{th:basis} is proved.

\bibliographystyle{plain}

\def\cprime{$'$}

\end{document}